\newtheorem{theorem}{Theorem}[section]
\newtheorem{lemma}[theorem]{Lemma}
\newtheorem{proposition}[theorem]{Proposition}
\newtheorem{remark}[theorem]{Remark}
\newtheorem{example}[theorem]{Example}
\newtheorem{corollary}[theorem]{Corollary}
\title{The logic of quantum mechanics incorporating time dimension}
\author{Ivan~Chajda and Helmut~L\"anger}
\date{}
\begin{document}

\footnotetext{Support of the research of the authors by the Austrian Science Fund (FWF), project I~4579-N, and the Czech Science Foundation (GA\v CR), project 20-09869L, entitled ``The many facets of orthomodularity'', is gratefully acknowledged.}

\maketitle

\begin{abstract}
Similarly as classical propositional calculus is based algebraically on Boolean algebras, the logic of quantum mechanics was based on orthomodular lattices by G.~Birkhoff and J.~von~Neumann \cite{BV} and K.~Husimi \cite H. However, this logic does not incorporate time dimension although it is apparent that the propositions occurring in the logic of quantum mechanics are depending on time. The aim of the present paper is to show that so-called tense operators can be introduced also in such a logic for given time set and given time preference relation. In this case we can introduce these operators in a purely algebraic way. We derive several important properties of such operators, in particular we show that they form dynamic pairs and, altogether, a dynamic algebra. We investigate connections of these operators with logical connectives conjunction and implication derived from Sasaki projections. Then we solve the converse problem, namely to find for given time set and given tense operators a time preference relation in order that the resulting time frame induces the given operators. We show that the given operators can be obtained as restrictions of operators induced by a suitable extended time frame.
\end{abstract}

{\bf AMS Subject Classification:} 03G12, 03B46, 06C15

{\bf Keywords:} Complete orthomodular lattice, event-state system, logic of quantum mechanics, tense operator, time frame, dynamic pair, dynamic algebra

\section{Introduction}

It is well known that any physical theory determines an event-state system $(\mathcal E,\mathcal S)$ where $\mathcal E$ contains the events that may occur with respect to the given system and $\mathcal S$ contains the states that such a physical system may assume. In quantum physics one usually identifies $\mathcal E$ with the set of projection operators of a Hilbert space $\mathbf H$. This set of operators is in bijective correspondence with the set $\mathcal C(\mathbf H)$ of closed subspaces of $\mathbf H$. The set $\mathcal C(\mathbf H)$ ordered by inclusion forms a complete orthomodular lattice. Such lattices were introduced in 1936 by G.~Birkhoff and J.~von~Neumann \cite{BV} and independently in 1937 by K.~Husimi \cite H as a suitable algebraic tool for investigating the logical structure underlying physical theories that, like mentioned quantum mechanics, do not obey the laws of classical logic. For the theory of orthomodular lattices cf.\ the monographs \cite{Be} and \cite K.

However, the logic based on orthomodular lattices does not incorporate the dimension of time. In order to organize this logic as a so-called {\em tense logic} (or {\em time logic} in another terminology, see e.g.\ \cite{CP} and \cite{RU}) we have to introduce the so-called {\em tense operators} $P$, $F$, $H$ and $G$. Their meaning is as follows:
\begin{quote}
$P$ ... ``It has at some time been the case that'', \\
$F$ ... ``It will at some time be the case that'', \\
$H$ ... ``It has always been the case that'', \\
$G$ ... ``It will always be the case that''.
\end{quote}
As the reader may guess, we need a time scale. For this reason a so-called {\em time frame} is introduced. It is a pair $(T,R)$ consisting of a non-empty set $T$ of {\em time} and a non-empty binary relation $R$ on $T$, the so-called relation of {\em time preference}, i.e.\ for $s,t\in T$ we say that $s\mathrel Rt$ means $s$ is {\em before} $t$ or, equivalently, $t$ is {\em after} $s$. For our purposes we will consider sometimes so-called {\em serial relations} $R$ (see \cite{CP}), i.e.\ binary relations $R$ such that for each $s\in T$ there exist $t,u\in T$ with $t\mathrel Rs$ and $s\mathrel Ru$. Every reflexive binary relation is serial. In physical theories $R$ is usually considered to be an order or a quasiorder.

It is worth noticing that our tense operators are in fact special sorts of modal operators, see e.g.\ \cite E and \cite{P57}. The theory of tense logic has its origin in works by A.~N.~Prior (cf.\ \cite{P57} and \cite{P67}) and in the monographs and chapters \cite{FGV}, \cite{GHR}, \cite G and \cite{HR}. For the classical propositional calculus, these operators were studied in \cite{Bu}, for MV-algebras in \cite{DG}, for intuitionistic logic in \cite E, and for De Morgan algebras in \cite{FP}. We hope that our approach for orthomodular lattices constitute an appropriate new achievement.

\section{Preliminaries}

First we recall several concepts used in lattice theory.

An {\em antitone involution} on a poset $(P,\leq)$ is a mapping $'$ from $P$ to $P$ satisfying the following conditions for all $x,y\in P$:
\begin{enumerate}[(i)]
\item $x\leq y$ implies $y'\leq x'$,
\item $x''=x$.
\end{enumerate}
A {\em complementation} on a bounded poset $(P,\leq,0,1)$ is a mapping $'$ from $P$ to $P$ satisfying $x\vee x'=1$ and $x\wedge x'=0$ for all $x\in P$. An {\em orthomodular lattice} is an algebra $(L,\vee,\wedge,{}',0,1)$ of type $(2,2,1,0,0)$ such that $(L,\vee,\wedge,0,1)$ is a bounded lattice, $'$ is an antitone involution that is a complementation and the {\em orthomodular law} holds:
\[
\text{If }x,y\in L\text{ and }x\leq y\text{ then }y=x\vee(y\wedge x').
\]
Let us remark that according to the De Morgan's laws the orthomodular law is equivalent to the following condition:
\[
\text{If }x,y\in L\text{ and }x\leq y\text{ then }x=y\wedge(x\vee y').
\]
In the following we consider a non-trivial (i.e.\ not one-element) complete (orthomodular) lattice $\mathbf L=(L,\vee,\wedge,0,1)$ ($\mathbf L=(L,\vee,\wedge,{}',0,1)$) and a given time frame $(T,R)$. We can define the tense operators as quantifiers over the time frame as follows:
\begin{align*}
P(q)(s) & :=\bigvee\{q(t)\mid t\mathrel Rs\}, \\
F(q)(s) & :=\bigvee\{q(t)\mid s\mathrel Rt\}, \\
H(q)(s) & :=\bigwedge\{q(t)\mid t\mathrel Rs\}, \\
G(q)(s) & :=\bigwedge\{q(t)\mid s\mathrel Rt\}
\end{align*}
for every $q\in L^T$ and $s\in T$. In such a case we call the {\em tense operators} $P$, $F$, $H$ and $G$ to be {\em derived} from or {\em induced} by the time frame $(T,R)$.

In complete orthomodular lattices there is a close connection between the tense operators $P$ and $H$ and the tense operators $F$ and $G$. Namely, if these tense operators are induced by the time frame $(T,R)$ then due to De Morgan's laws we have $H(q)=P(q')'$ and $G(q)=F(q')'$ for all $q\in L^T$.

\begin{example}\label{ex1}
Consider the orthomodular lattice $\mathbf L$ depicted in Fig.~1:

\vspace*{-4mm}

\begin{center}
\setlength{\unitlength}{7mm}
\begin{picture}(12,8)
\put(6,1){\circle*{.3}}
\put(1,4){\circle*{.3}}
\put(4,3){\circle*{.3}}
\put(4,5){\circle*{.3}}
\put(6,3){\circle*{.3}}
\put(6,5){\circle*{.3}}
\put(6,7){\circle*{.3}}
\put(8,3){\circle*{.3}}
\put(8,5){\circle*{.3}}
\put(11,4){\circle*{.3}}
\put(6,1){\line(-5,3)5}
\put(6,1){\line(-1,1)2}
\put(6,1){\line(0,1)2}
\put(6,1){\line(1,1)2}
\put(6,1){\line(5,3)5}
\put(6,7){\line(-5,-3)5}
\put(6,7){\line(-1,-1)2}
\put(6,7){\line(0,-1)2}
\put(6,7){\line(1,-1)2}
\put(6,7){\line(5,-3)5}
\put(6,3){\line(-1,1)2}
\put(6,3){\line(1,1)2}
\put(6,5){\line(-1,-1)2}
\put(6,5){\line(1,-1)2}
\put(4,3){\line(0,1)2}
\put(8,3){\line(0,1)2}
\put(5.85,.3){$0$}
\put(3.35,2.85){$a$}
\put(6.4,2.85){$b$}
\put(8.4,2.85){$c$}
\put(3.35,4.85){$c'$}
\put(6.4,4.85){$b'$}
\put(8.4,4.85){$a'$}
\put(5.85,7.4){$1$}
\put(.35,3.85){$d$}
\put(11.4,3.85){$d'$}
\put(5.2,-.75){{\rm Fig.~1}}
\end{picture}
\end{center}

\vspace*{4mm}

Put $(T,R):=(\{1,2,3,4,5\},\leq)$ and define time depending propositions $p,q\in L^T$ as follows:
\[
\begin{array}{l|l|l|l|l|l}
t    & 1  & 2  & 3  & 4  & 5 \\
\hline
p(t) & c' & b' & c' & a' & b'
\end{array}
\quad\quad\quad
\begin{array}{l|l|l|l|l|l}
t    & 1 & 2  & 3 & 4 & 5 \\
\hline
q(t) & a & b' & d & a & a'
\end{array}
\]
Then we have
\[
\begin{array}{l|l|l|l|l|l}
t        & 1  & 2 & 3 & 4 & 5 \\
\hline
P(p)(t)  & c' & 1  & 1  & 1  & 1 \\
F(p)(t)  & 1  & 1  & 1  & 1  & b' \\
H(p)(t)  & c' & a  & a  & 0  & 0 \\
G(p)(t)  & 0  & 0  & 0  & c  & b'
\end{array}
\quad\quad\quad
\begin{array}{l|l|l|l|l|l}
t        & 1 & 2  & 3 & 4 & 5 \\
\hline
P(q)(t)  & a & b' & 1 & 1 & 1 \\
F(q)(t)  & 1 & 1  & 1 & 1 & a' \\
H(q)(t)  & a & a  & 0 & 0 & 0 \\
G(q)(t)  & 0 & 0  & 0 & 0 & a'
\end{array}
\]
\end{example}

\section{Dynamic pairs}

At first we prove that for tense operators as defined above the pairs $(P,G)$ and $(F,H)$ form so-called {\em dynamic pairs}, thus $(\mathbf L,P,F,H,G)$ is a so-called {\em dynamic algebra} (see \cite{CP} for details).

\begin{theorem}\label{th1}
Let $(L,\vee,\wedge,0,1)$ be a complete lattice, $(T,R)$ a time frame with serial relation $R$, $P$, $F$, $H$ and $G$ denote the tense operators induced by $(T,R)$ and $p,q\in L^T$. Then the following holds:
\begin{enumerate}[{\rm(i)}]
\item $P(0)=F(0)=H(0)=G(0)=0$ and $P(1)=F(1)=H(1)=G(1)=1$,
\item $p\leq q$ implies $P(p)\leq P(q)$, $F(p)\leq F(q)$, $H(p)\leq H(q)$ and $G(p)\leq G(q)$,
\item $PG(q)\leq q\leq GP(q)$, \\
$FH(q)\leq q\leq HF(q)$.
\end{enumerate}
\end{theorem}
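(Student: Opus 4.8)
The plan is to verify every assertion pointwise: fix an arbitrary $s\in T$, expand both sides using the joins and meets defining the operators, and compare them term by term, relying only on completeness of $L$ and on the order-theoretic fact that a join of elements all below some $x$ is itself below $x$ (dually for meets). Throughout, $0$ and $1$ denote the constant maps in $L^T$.

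For (i) I would substitute the constant functions. Evaluating, say, $H(0)(s)=\bigwedge\{0\mid t\mathrel Rs\}$ and $P(1)(s)=\bigvee\{1\mid t\mathrel Rs\}$, the value is $0$ (resp.\ $1$) precisely when the index set $\{t\mid t\mathrel Rs\}$ is non-empty; this is exactly what seriality of $R$ guarantees. The same remark applies to $G(0)$ and $F(1)$, using the successor set $\{t\mid s\mathrel Rt\}$. The remaining four identities, $P(0)=F(0)=0$ and $H(1)=G(1)=1$, hold regardless, since an empty join is $0$ and an empty meet is $1$. Part (ii) is then immediate: pointwise $p\leq q$ gives $p(t)\leq q(t)$ for every $t$, and since join and meet are monotone in each of their arguments, each of the four defining joins or meets for $p$ is dominated by the corresponding one for $q$.

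The heart of the theorem is (iii), and the single observation driving all four inequalities is this: whenever $a\mathrel Rb$ holds, $b$ lies in the successor set $\{u\mid a\mathrel Ru\}$ and $a$ lies in the predecessor set $\{u\mid u\mathrel Rb\}$. To prove $PG(q)\leq q$ I would fix $s$, write $PG(q)(s)=\bigvee\{G(q)(t)\mid t\mathrel Rs\}$, and note that for each index $t$ (so $t\mathrel Rs$) the point $s$ occurs in the meet defining $G(q)(t)$, whence $G(q)(t)\leq q(s)$; a join of elements all below $q(s)$ is below $q(s)$. Dually, for $q\leq GP(q)$ I would use that for each $t$ with $s\mathrel Rt$ the point $s$ occurs in the join defining $P(q)(t)$, so $q(s)\leq P(q)(t)$, and a meet of elements all above $q(s)$ is above $q(s)$. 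The two inequalities involving $F$ and $H$ follow by the very same scheme, only interchanging the roles of the predecessor and successor sets. Note that since the present hypotheses give merely a complete lattice with no complementation, I cannot shortcut the $(F,H)$ pair through the De~Morgan duality $H(q)=P(q')'$, so I would spell both cases out directly.

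I expect no genuine obstacle here: the arguments are short and uniform. The only points demanding care are bookkeeping---keeping the direction of $R$ and the identity of each index set straight across the four cases---and recognizing that seriality is needed solely in (i), whereas (iii) goes through even for empty index sets, since there the extremal value falls on the favourable side of each inequality.
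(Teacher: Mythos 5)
Your proof is correct and follows essentially the same route as the paper's: pointwise verification, with (iii) reduced in both cases to the observation that $t\mathrel Rs$ places $s$ in the index set of the meet defining $G(q)(t)$ (resp.\ places $s$ in the index set of the join defining $P(q)(t)$). Your additional remarks---that seriality is needed only for $H(0)=G(0)=0$ and $P(1)=F(1)=1$ while the other identities and parts (ii)--(iii) survive empty index sets---are accurate refinements the paper does not spell out, but they do not change the argument.
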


\begin{proof}
Let $s\in T$.
\begin{enumerate}[(i)]
\item Since $R$ is serial we have $P(0)(s)=\bigvee\{0\mid t\mathrel Rs\}=0$ and $P(1)(s)=\bigvee\{1\mid t\mathrel Rs\}=1$. The situation for $F$, $H$ and $G$ is analogous.
\item Assume $p\leq q$. Then
\[
p(t)\leq q(t)\leq\bigvee\{q(u)\mid u\mathrel Rs\}=P(q)(s)
\]
for all $t\in T$ with $t\mathrel Rs$ and hence
\[
P(p)(s)=\bigvee\{p(t)\mid t\mathrel Rs\}\leq P(q)(s).
\]
This shows $P(p)\leq P(q)$. The inequality $F(p)\leq F(q)$ can be shown analogously. Moreover,
\[
H(p)(s)=\bigwedge\{p(u)\mid u\mathrel Rs\}\leq p(t)\leq q(t)
\]
for all $t\in T$ with $t\mathrel Rs$ and hence
\[
H(p)(s)\leq\bigwedge\{q(t)\mid t\mathrel Rs\}=H(q)(s).
\]
This shows $H(p)\leq H(q)$. The inequality $G(p)\leq G(q)$ can be shown analogously.
\item
The following are equivalent:
\begin{align*}
                           PG(q)(s) & \leq q(s), \\
\bigvee\{G(q)(t)\mid t\mathrel Rs\} & \leq q(s), \\
                            G(q)(t) & \leq q(s)\text{ for all }t\in T\text{ with }t\mathrel Rs, \\
 \bigwedge\{q(u)\mid t\mathrel Ru\} & \leq q(s)\text{ for all }t\in T\text{ with }t\mathrel Rs.
\end{align*}
Since the last statement is true, the same holds for the first statement. Analogously, one can prove $FH(q)\leq q$. Now the following are equivalent:
\begin{align*}
q(s) & \leq GP(q)(s), \\
q(s) & \leq\bigwedge\{P(q)(t)\mid s\mathrel Rt\}, \\
q(s) & \leq P(q)(t)\text{ for all }t\in T\text{ with }s\mathrel Rt, \\
q(s) & \leq\bigvee\{q(u)\mid u\mathrel Rt\}\text{ for all }t\in T\text{ with }s\mathrel Rt.
\end{align*}
Since the last statement is true, the same holds for the first statement. Analogously, one can prove $q\leq HF(q)$.
\end{enumerate}
\end{proof}

If the operators $P$, $F$, $H$ and $G$ on the complete lattice $\mathbf L$ satisfy (i), (ii) and (iii) of Theorem~\ref{th1} then the quintuple $(\mathbf L,P,F,H,G)$ will be referred to as a {\em dynamic algebra}.

\begin{example}
For $p$ and $q$ of Example~\ref{ex1} we obtain
\[
\begin{array}{l|l|l|l|l|l}
t        & 1  & 2 & 3 & 4 & 5 \\
\hline
p(t)     & c' & b' & c' & a' & b' \\
PG(p)(t) & 0  & 0  & 0  & c  & b' \\
GP(p)(t) & c' & 1  & 1  & 1  & 1
\end{array}
\quad\quad\quad
\begin{array}{l|l|l|l|l|l}
t        & 1 & 2  & 3 & 4 & 5 \\
\hline
q(t)     & a & b' & d & a & a' \\
PG(q)(t) & 0 & 0  & 0 & 0 & a' \\
GP(q)(t) & a & b' & 1 & 1 & 1
\end{array}
\]
showing that $PG(p)\leq p\leq GP(p)$ and $PG(q)\leq q\leq GP(q)$ and, moreover, that these inequalities are strict.
\end{example}

In the following we establish several properties of tense operators on complete lattices that are in accordance with the general approach presented in \cite{CP} and \cite{RU}.

\begin{theorem}\label{th2}
Let $(L,\vee,\wedge,0,1)$ be a complete lattice, $(T,R)$ a time frame with serial relation $R$, $P$, $F$, $H$ and $G$ denote the tense operators induced by $(T,R)$ and $q\in L^T$. Then the following hold:
\begin{enumerate}[{\rm(i)}]
\item $H(q)\leq P(q)$ and $G(q)\leq F(q)$,
\item if $R$ is reflexive then $H(q)\leq q\leq P(q)$ and $G(q)\leq q\leq F(q)$.
\end{enumerate}
\end{theorem}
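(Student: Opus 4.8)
The plan is to prove both statements pointwise: I would fix an arbitrary $s\in T$, evaluate each tense operator at $s$, and then compare the resulting meets and joins, which are taken over the \emph{same} index set. For part~(i) the key observation is that $H(q)(s)$ and $P(q)(s)$ are respectively the infimum and the supremum of one and the same family $\{q(t)\mid t\mathrel Rs\}$, and likewise $G(q)(s)$ and $F(q)(s)$ are the infimum and supremum of $\{q(t)\mid s\mathrel Rt\}$. In a complete lattice the infimum of a nonempty family never exceeds its supremum, so the only thing to check is that these index sets are nonempty. This is precisely where seriality enters: since $R$ is serial, for each $s$ there exist $t,u\in T$ with $t\mathrel Rs$ and $s\mathrel Ru$, so both families are nonempty. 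Picking such a $t$ gives $H(q)(s)\leq q(t)\leq P(q)(s)$, and picking such a $u$ gives $G(q)(s)\leq q(u)\leq F(q)(s)$; as $s$ was arbitrary, this yields $H(q)\leq P(q)$ and $G(q)\leq F(q)$.

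For part~(ii) I would use reflexivity to insert the value $q(s)$ itself into each of the four extremizations. When $R$ is reflexive we have $s\mathrel Rs$, so $s$ belongs to both $\{t\mid t\mathrel Rs\}$ and $\{t\mid s\mathrel Rt\}$. Consequently $q(s)$ is one of the arguments over which the meet $H(q)(s)$ and the join $P(q)(s)$ are formed, whence $H(q)(s)\leq q(s)\leq P(q)(s)$; applying the same reasoning to the family $\{t\mid s\mathrel Rt\}$ gives $G(q)(s)\leq q(s)\leq F(q)(s)$. Since $s$ is arbitrary, these pointwise inequalities lift immediately to the asserted inequalities in $L^T$.

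The argument is essentially bookkeeping about nonempty index sets, so I do not anticipate a genuine obstacle. The one point demanding care is the nonemptiness needed in part~(i): the inequality ``meet $\leq$ join'' fails for the empty family, since in a nontrivial bounded lattice $\bigwedge\emptyset=1>0=\bigvee\emptyset$. Thus the serial hypothesis is not cosmetic but is exactly what guarantees $H(q)\leq P(q)$ and $G(q)\leq F(q)$. In part~(ii) reflexivity automatically furnishes the witness $s$ itself and hence also ensures nonemptiness, so that case requires no separate appeal to seriality.
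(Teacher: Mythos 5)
Your proof is correct and follows essentially the same route as the paper's: in both, part (i) is proved pointwise by using seriality to produce a witness $t$ with $t\mathrel Rs$ (resp.\ $u$ with $s\mathrel Ru$) and sandwiching $q$ at that witness between the meet and the join of the same family, while part (ii) inserts $q(s)$ itself via reflexivity. Your added remark that seriality is exactly what rules out the empty family (where $\bigwedge\emptyset=1>0=\bigvee\emptyset$) is a nice clarification, but the argument itself coincides with the paper's.
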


\begin{proof}
Let $s\in T$.
\begin{enumerate}[(i)]
\item Since $R$ is serial there exists some $u\in T$ with $u\mathrel Rs$ and we have
\[
H(q)(s)=\bigwedge\{q(t)\mid t\mathrel Rs\}\leq q(u)\leq\bigvee\{q(t)\mid t\mathrel Rs\}=P(q)(s).
\]
The proof for $G$ and $F$ is analogous.
\item We have
\[
H(q)(s)=\bigwedge\{q(t)\mid t\mathrel Rs\}\leq q(s)\leq\bigvee\{q(t)\mid t\mathrel Rs\}=P(q)(s).
\]
The proof for $G$ and $F$ is analogous.
\end{enumerate}
\end{proof}

We define $A\leq B$ for $A,B\in\{P,F,H,G\}$ by $A(q)\leq B(q)$ for all $q\in L^T$.

\begin{theorem}
Let $(L,\vee,\wedge,0,1)$ be a complete lattice, $(T,R)$ a time frame with reflexive $R$, $P$, $F$, $H$ and $G$ denote the tense operators induced by $(T,R)$, $A\in\{P,F,H,G\}$, $B\in\{P,F\}$ and $C\in\{H,G\}$. Then the following hold:
\begin{enumerate}[{\rm(i)}]
\item $A\leq AB$ and $AC\leq A$,
\item if $R$ is, moreover, transitive then $AA=A$.
\end{enumerate}
\end{theorem}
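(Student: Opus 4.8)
The plan is to reduce the whole statement to two facts already in hand: the monotonicity of each tense operator (Theorem~\ref{th1}(ii)) and the reflexivity inequalities $H(q)\le q\le P(q)$ and $G(q)\le q\le F(q)$ of Theorem~\ref{th2}(ii), supplemented for part~(ii) by a single pointwise index-set computation that invokes transitivity.

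For part~(i) I would first record that the hypothesis $B\in\{P,F\}$ gives, by Theorem~\ref{th2}(ii), the pointwise inequality $q\le B(q)$, while $C\in\{H,G\}$ gives $C(q)\le q$, for every $q\in L^T$. Since $A$ is monotone by Theorem~\ref{th1}(ii), applying $A$ to $q\le B(q)$ yields $A(q)\le AB(q)$, i.e.\ $A\le AB$; applying $A$ to $C(q)\le q$ yields $AC(q)\le A(q)$, i.e.\ $AC\le A$. No further structure is needed here.

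For part~(ii) I would prove the two inequalities $A\le AA$ and $AA\le A$ separately, one coming from reflexivity (already packaged in~(i)) and the other from transitivity. When $A\in\{P,F\}$, specialising~(i) to $B:=A$ gives $A\le AA$ at once; when $A\in\{H,G\}$, specialising $C:=A$ gives $AA\le A$. For the complementary inequality I would unfold the double composition pointwise: for $A=P$ one gets $PP(q)(s)=\bigvee\{q(u)\mid\exists t\in T:\ u\mathrel Rt\text{ and }t\mathrel Rs\}$, and transitivity shows this index set is contained in $\{u\mid u\mathrel Rs\}$, whence $PP(q)(s)\le P(q)(s)$; combined with $P\le PP$ this gives $PP=P$. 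The case $A=F$ is the mirror image with the chain $s\mathrel Rt\mathrel Ru$. For the meet operators the very same index-set containment holds, but since a join over a smaller set decreases while a meet over a smaller set increases, the resulting inequality flips direction: for $A=H$ transitivity yields $H\le HH$, which together with $HH\le H$ from~(i) gives $HH=H$, and symmetrically $GG=G$.

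The only real subtlety, and the step I would treat most carefully, is this direction-tracking for the meet operators: reflexivity and transitivity produce the same inclusion between the single-step and the iterated index sets, yet their effect on the value of an infimum is opposite to their effect on a supremum. Once one keeps straight that transitivity is what shrinks the iterated index set (via $u\mathrel Rt\mathrel Rs\Rightarrow u\mathrel Rs$) while reflexivity is exactly what is already encoded in~(i), all four cases close immediately, so I expect no genuine obstacle beyond this bookkeeping.
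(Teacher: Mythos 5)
Your proposal is correct and follows essentially the same route as the paper: part (i) from monotonicity (Theorem~\ref{th1}(ii)) combined with the reflexivity inequalities (Theorem~\ref{th2}(ii)), and part (ii) by getting one inequality from (i) (namely $A\le AA$ for $A\in\{P,F\}$ and $AA\le A$ for $A\in\{H,G\}$) and the complementary one from transitivity via the containment $\{u\mid u\mathrel Rt\mathrel Rs\text{ for some }t\}\subseteq\{u\mid u\mathrel Rs\}$. The only cosmetic difference is that you collapse the double join/meet into a single one over the composed index set, whereas the paper runs the equivalent chain of pointwise equivalences; the key fact invoked ($t\mathrel Rs$ and $u\mathrel Rt$ imply $u\mathrel Rs$) and the direction-flip for the meet operators are identical.
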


\begin{proof}
\
\begin{enumerate}[(i)]
\item This follows from Theorems~\ref{th2} and \ref{th1}.
\item According to (i) we have $P\leq PP$. Let $q\in L^T$ and $s\in T$. Then the following are equivalent:
\begin{align*}
                           PP(q)(s) & \leq P(q)(s), \\
\bigvee\{P(q)(t)\mid t\mathrel Rs\} & \leq P(q)(s), \\
                            P(q)(t) & \leq P(q)(s)\text{ for all }t\in T\text{ with }t\mathrel Rs, \\
   \bigvee\{q(u)\mid u\mathrel Rt\} & \leq P(q)(s)\text{ for all }t\in T\text{ with }t\mathrel Rs, \\
                               q(u) & \leq \bigvee\{q(w)\mid w\mathrel Rs\}\text{ for all }t\in T\text{ with }t\mathrel Rs\text{ and all }u\in T\text{ with} \\
															      & \hspace*{6mm}u\mathrel Rt.
\end{align*}
Since, due to transitivity of $R$, $t\mathrel Rs$ and $u\mathrel Rt$ together imply $u\mathrel Rs$, the last statement is true and hence the same holds for the first statement. Therefore $PP\leq P$, and together we obtain $PP=P$. The proof of $FF=F$ is analogous. According to (i) we have $HH\leq H$. Now the following are equivalent:
\begin{align*}
                           H(q)(s) & \leq HH(q)(s), \\
                           H(q)(s) & \leq\bigwedge\{H(q)(t)\mid t\mathrel Rs\}, \\
                           H(q)(s) & \leq H(q)(t)\text{ for all }t\in T\text{ with }t\mathrel Rs, \\
                           H(q)(s) & \leq\bigwedge\{q(u)\mid u\mathrel Rt\}\text{ for all }t\in T\text{ with }t\mathrel Rs, \\
\bigwedge\{q(w)\mid w\mathrel Rs\} & \leq q(u)\text{ for all }t\in T\text{ with }t\mathrel Rs\text{ and all }u\in T\text{ with }u\mathrel Rt.
\end{align*}
Since, due to transitivity of $R$, $t\mathrel Rs$ and $u\mathrel Rt$ together imply $u\mathrel Rs$, the last statement is true and hence the same holds for the first statement. This shows $H\leq HH$, and together we obtain $HH=H$. The proof of $GG=G$ is analogous.
\end{enumerate}
\end{proof}

\section{Connections with logical connectives}

Considering the logic based on an orthomodular lattice $(L,\vee,\wedge,{}',0,1)$ one can ask for logical connectives. One way how to introduce the {\em conjunction} $\odot$ and the implication $\rightarrow$ is based on the so-called {\em Sasaki projections} (see \cite{Be}). This method was successfully used by the authors in \cite{CL17a} and \cite{CL17b} for investigating left adjointness. Let us recall the corresponding definitions:
\begin{equation}\label{equ1}
\begin{array}l
x\odot y:=(x\vee y')\wedge y, \\
x\rightarrow y:=(y\wedge x)\vee x'
\end{array}
\end{equation}
for all $x,y\in L$. The Sasaki projection $p_y$ on $[0,y]$ is given by $p_y(x):=(x\vee y')\wedge y$ for all $x\in L$. Hence we have $x\odot y=p_y(x)$ and $x\rightarrow y=\big(p_x(y')\big)'$ for all $x,y\in L$.

The following result was proved in \cite{CL17a} and \cite{CL17b}.

\begin{proposition}\label{prop1}
Let $(L,\vee,\wedge,{}',0,1)$ be an orthomodular lattice, $\odot$ and $\rightarrow$ defined by {\rm(\ref{equ1})} and $a,b,c\in L$. Then the following holds:
\begin{enumerate}[{\rm(i)}]
\item $a\odot1=1\odot a=a$,
\item $a\odot b\leq c$ if and only if $a\leq b\rightarrow c$ {\rm(}left adjointness{\rm)},
\item $a'=a\rightarrow0$.
\end{enumerate}
\end{proposition}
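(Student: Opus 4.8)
The plan is to dispose of (i) and (iii) by direct substitution into the definitions in (\ref{equ1}) and to reserve the real effort for the left adjointness (ii), where the orthomodular law is indispensable. For (i), substituting $y=1$ gives $a\odot1=(a\vee1')\wedge1=(a\vee0)\wedge1=a$, and substituting $x=1,\,y=a$ gives $1\odot a=(1\vee a')\wedge a=1\wedge a=a$; for (iii), substituting $y=0$ gives $a\rightarrow0=(0\wedge a)\vee a'=0\vee a'=a'$. None of these uses orthomodularity, only the complementation laws and boundedness.

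For (ii) I would first isolate the two orthomodular identities that drive both implications. Applying the orthomodular law to $b'\leq a\vee b'$ (and using $b''=b$) yields
\[
a\vee b'=b'\vee\big((a\vee b')\wedge b\big),
\]
equivalently $\big((a\vee b')\wedge b\big)\vee b'=a\vee b'$, which rewrites $(a\odot b)\vee b'$ as $a\vee b'$. Dually, applying the equivalent form of the orthomodular law quoted in the Preliminaries to $c\wedge b\leq b$ gives
\[
c\wedge b=b\wedge\big((c\wedge b)\vee b'\big),
\]
equivalently $\big((c\wedge b)\vee b'\big)\wedge b=c\wedge b$, which computes $(b\rightarrow c)\wedge b$.

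With these in hand the two directions are short. For the forward direction I would assume $a\odot b\leq c$; since $a\odot b=(a\vee b')\wedge b\leq b$ always, this upgrades to $a\odot b\leq c\wedge b$, whence $(a\odot b)\vee b'\leq(c\wedge b)\vee b'=b\rightarrow c$, and the first identity gives $(a\odot b)\vee b'=a\vee b'\geq a$, so $a\leq b\rightarrow c$. For the converse I would assume $a\leq b\rightarrow c=(c\wedge b)\vee b'$; joining with $b'$ yields $a\vee b'\leq(c\wedge b)\vee b'$, and meeting with $b$ yields $a\odot b=(a\vee b')\wedge b\leq\big((c\wedge b)\vee b'\big)\wedge b$, which by the second identity equals $c\wedge b\leq c$, so $a\odot b\leq c$.

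The main obstacle is recognizing that orthomodularity, rather than mere distributivity (which is unavailable here), is exactly what makes the two bracketed expressions collapse. Once the identities $\big((a\vee b')\wedge b\big)\vee b'=a\vee b'$ and $\big((c\wedge b)\vee b'\big)\wedge b=c\wedge b$ are secured, together with the trivial bound $a\odot b\leq b$, everything else reduces to monotone lattice manipulation.
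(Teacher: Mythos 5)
Your proof is correct, and it is worth noting that the paper itself gives no proof of this proposition at all: it simply cites \cite{CL17a} and \cite{CL17b}, so there is no in-paper argument to compare against. Your parts (i) and (iii) are routine substitutions, exactly as one would expect, and your part (ii) is sound: both orthomodular identities are legitimate instances of the two equivalent forms of the orthomodular law quoted in the Preliminaries (applied to $b'\leq a\vee b'$ and to $c\wedge b\leq b$ respectively), and the remaining steps use only monotonicity of $\vee$ and $\wedge$. It is also worth pointing out that your two key identities are essentially the content of the paper's Lemma~\ref{lem1}: your second identity $\big((c\wedge b)\vee b'\big)\wedge b=c\wedge b$ is precisely Lemma~\ref{lem1}(i) in the form $(b\rightarrow c)\odot b=c\wedge b$, and your first identity $\big((a\vee b')\wedge b\big)\vee b'=a\vee b'$ is exactly the computation the paper uses to prove Lemma~\ref{lem1}(ii). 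So your proof in effect shows that the adjointness (ii) follows from the same two orthomodular manipulations that the paper later isolates as a lemma; this makes the logical structure of Section~4 cleaner than in the paper itself, where the adjointness is imported from external references while the lemma is proved independently. One small stylistic remark: in the converse direction you write $a\vee b'\leq(c\wedge b)\vee b'\vee b'$ implicitly; since joining with $b'$ twice is idempotent this is harmless, but stating it as ``joining both sides of $a\leq(c\wedge b)\vee b'$ with $b'$'' is the cleanest phrasing.
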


The following lemma will be used in the next proof.

\begin{lemma}\label{lem1}
Let $(L,\vee,\wedge,{}',0,1)$ be an orthomodular lattice, $\odot$ and $\rightarrow$ defined by {\rm(\ref{equ1})} and $a,b\in L$. Then the following holds:
\begin{enumerate}[{\rm(i)}]
\item $(a\rightarrow b)\odot a=a\wedge b$,
\item $a\leq b\rightarrow(a\odot b)$.
\end{enumerate}
\end{lemma}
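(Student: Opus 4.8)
The plan is to dispose of part~(ii) first, as it follows immediately from the left adjointness established in Proposition~\ref{prop1}, and then to settle part~(i) by a short direct computation that invokes the orthomodular law.

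For part~(ii) I would invoke the left adjointness of Proposition~\ref{prop1}(ii), namely $a\odot b\leq c$ if and only if $a\leq b\rightarrow c$, and specialise it to $c:=a\odot b$. Since $a\odot b\leq a\odot b$ holds trivially, the equivalence yields $a\leq b\rightarrow(a\odot b)$ at once, with no further work.

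For part~(i) I would unfold the definitions in~(\ref{equ1}). Writing $a\rightarrow b=(b\wedge a)\vee a'$ and using $a'\vee a'=a'$, we obtain
\[
(a\rightarrow b)\odot a=\bigl((a\rightarrow b)\vee a'\bigr)\wedge a=\bigl((b\wedge a)\vee a'\bigr)\wedge a.
\]
Now set $u:=b\wedge a$, so that $u\leq a$. The dual form of the orthomodular law recalled in the preliminaries, namely that $x\leq y$ implies $x=y\wedge(x\vee y')$, applies with $x:=u$ and $y:=a$ and gives $u=a\wedge(u\vee a')$. Rewritten, this reads $\bigl((b\wedge a)\vee a'\bigr)\wedge a=b\wedge a=a\wedge b$, which is precisely the claim.

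The only genuine subtlety lies in part~(i): one must recognise that the disjunct $a'$ produced by $\odot$ is absorbed into the $a'$ already present in $a\rightarrow b$, and, more importantly, that the resulting expression $\bigl((b\wedge a)\vee a'\bigr)\wedge a$ is exactly an instance of the dual orthomodular law applied to the comparable pair $b\wedge a\leq a$. Once this is seen the proof is a one-line substitution; the main pitfall would be attempting to distribute the meet over the join, which is illegitimate in a non-distributive orthomodular lattice.
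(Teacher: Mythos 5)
Your proof is correct, and it agrees with the paper on part (i) but takes a genuinely different route on part (ii). For (i) your computation is essentially the paper's: expand $(a\rightarrow b)\odot a$ to $\big((b\wedge a)\vee a'\vee a'\big)\wedge a$, absorb the duplicated $a'$, and recognize the result as an instance of the dual orthomodular law applied to the comparable pair $b\wedge a\leq a$. For (ii), the paper does not use adjointness at all; it gives another direct orthomodular computation: starting from $a\leq a\vee b'$, it applies the orthomodular law to $b'\leq a\vee b'$ to get $a\vee b'=b'\vee\big((a\vee b')\wedge b\big)$, and then identifies this last expression with $b\rightarrow(a\odot b)$ by unfolding the definitions in (\ref{equ1}). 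You instead obtain (ii) in one stroke from the left adjointness of Proposition~\ref{prop1}(ii), specialized to $c:=a\odot b$; this is just the unit of the adjunction, and since Proposition~\ref{prop1} is quoted as an already-established result (proved in the cited papers) before the lemma, there is no circularity in doing so. What each approach buys: yours is shorter and more conceptual, making clear that (ii) is a formal consequence of residuation and needs no lattice computation; the paper's keeps the lemma self-contained as a pure orthomodular-law calculation, which also makes explicit exactly where orthomodularity enters and does not lean on an externally proved adjointness. Both arguments are sound.
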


\begin{proof}
\
\begin{enumerate}[(i)]
\item Using the orthomodular law we obtain
\[
(a\rightarrow b)\odot a=\big((b\wedge a)\vee a'\vee a'\big)\wedge a=a\wedge\big((a\wedge b)\vee a'\big)=a\wedge b.
\]
\item Using again the orthomodular law we obtain
\[
a\leq a\vee b'=b'\vee\big((a\vee b')\wedge b\big)=\big((a\vee b')\wedge b\wedge b\big)\vee b'=b\rightarrow(a\odot b).
\]
\end{enumerate}
\end{proof}

Our next task is to show connections of tense operators with logical connectives $\odot$ and $\rightarrow$. Using Proposition~\ref{prop1} and Lemma~\ref{lem1} we can prove the following theorem.

\begin{theorem}\label{th6}
Let $(L,\vee,\wedge,{}',0,1)$ be a complete orthomodular lattice, $(T,R)$ a time frame, $P$, $F$, $H$ and $G$ denote the tense operators induced by $(T,R)$ and $A\in\{P,F,H,G\}$. Then the following two assertions are equivalent:
\begin{enumerate}[{\rm(i)}]
\item $A(x)\odot A(y)\leq A(x\odot y)$ for all $x,y\in L^T$,
\item $A(x\rightarrow y)\leq A(x)\rightarrow A(y)$ for all $x,y\in L^T$.
\end{enumerate}
\end{theorem}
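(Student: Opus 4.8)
The plan is to exploit left adjointness (Proposition~\ref{prop1}(ii)) to transpose each of the two inequalities into a statement of the opposite shape, with the two halves of Lemma~\ref{lem1} serving as the bridge and monotonicity of $A$ (Theorem~\ref{th1}(ii)) doing the remaining work. First I would record that $\odot$ and $\rightarrow$ extend componentwise to $L^T$, so that both left adjointness and both parts of Lemma~\ref{lem1} hold verbatim for arguments in $L^T$, and that $A$ is monotone on $L^T$. After that the proof is a purely formal manipulation in both directions.

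For the implication (i)$\Rightarrow$(ii) I would fix $x,y\in L^T$ and apply left adjointness to the target: $A(x\rightarrow y)\leq A(x)\rightarrow A(y)$ holds if and only if $A(x\rightarrow y)\odot A(x)\leq A(y)$. Feeding the hypothesis (i) the arguments $x\rightarrow y$ and $x$ gives $A(x\rightarrow y)\odot A(x)\leq A\big((x\rightarrow y)\odot x\big)$, and Lemma~\ref{lem1}(i) rewrites the right-hand side as $A(x\wedge y)$. Since $x\wedge y\leq y$, monotonicity yields $A(x\wedge y)\leq A(y)$, and chaining these inequalities produces exactly $A(x\rightarrow y)\odot A(x)\leq A(y)$, which is the transposed form of (ii).

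For (ii)$\Rightarrow$(i) I would run the same machine in reverse, now starting from Lemma~\ref{lem1}(ii). From $x\leq y\rightarrow(x\odot y)$ monotonicity gives $A(x)\leq A\big(y\rightarrow(x\odot y)\big)$; the hypothesis (ii) applied to the arguments $y$ and $x\odot y$ gives $A\big(y\rightarrow(x\odot y)\big)\leq A(y)\rightarrow A(x\odot y)$; combining these yields $A(x)\leq A(y)\rightarrow A(x\odot y)$. A single final appeal to left adjointness transposes this into $A(x)\odot A(y)\leq A(x\odot y)$, which is (i).

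I do not expect a genuine obstacle here: the whole argument is a double use of the adjunction, so the only thing that needs care is pairing the correct half of Lemma~\ref{lem1} with each direction and invoking left adjointness with its two arguments in the right order. It is worth stressing in the write-up that the only property of $A$ actually used is monotonicity (the definitions of $\odot$ and $\rightarrow$ and the lemmas live entirely in $L$), so the equivalence holds uniformly for all four tense operators $P,F,H,G$ and for an arbitrary time frame, with neither seriality nor reflexivity of $R$ required.
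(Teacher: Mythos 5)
Your proof is correct and follows essentially the same route as the paper's own argument: both directions use the corresponding half of Lemma~\ref{lem1}, monotonicity from Theorem~\ref{th1}, and left adjointness from Proposition~\ref{prop1}, applied to exactly the same arguments. Your closing remark that only monotonicity of $A$ is needed (so neither seriality nor reflexivity of $R$ is required) is a accurate observation that the paper leaves implicit.
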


\begin{proof}
Let $p,q\in L^T$. First assume (i). According to (i) we have
\[
A(p\rightarrow q)\odot A(p)\leq A\big((p\rightarrow q)\odot p\big)
\]
Now
\[
(p\rightarrow q)\odot p=p\wedge q
\]
because of Lemma~\ref{lem1} and hence
\[
A\big((p\rightarrow q)\odot p\big)=A(p\wedge q).
\]
Applying Theorem~\ref{th1} to $p\wedge q\leq q$ yields
\[
A(p\wedge q)\leq A(q).
\]
Altogether, we obtain
\[
A(p\rightarrow q)\odot A(p)\leq A(q).
\]
Thus, by Proposition~\ref{prop1} we conclude
\[
A(p\rightarrow q)\leq A(p)\rightarrow A(q)
\]
showing (ii). Conversely, assume (ii). According to Lemma~\ref{lem1} we have
\[
p\leq q\rightarrow(p\odot q).
\]
Applying Theorem~\ref{th1} we conclude
\[
A(p)\leq A\big(q\rightarrow(p\odot q)\big).
\]
Using (ii) we obtain
\[
A\big(q\rightarrow(p\odot q)\big)\leq A(q)\rightarrow A(p\odot q).
\]
Altogether, we have
\[
A(p)\leq A(q)\rightarrow A(p\odot q).
\]
Thus, by Proposition~\ref{prop1} we conclude
\[
A(p)\odot A(q)\leq A(p\odot q)
\]
showing (i).
\end{proof}

However, we can prove also further interesting connections between these operators.

\begin{theorem}\label{th7}
Let $(L,\vee,\wedge,{}',0,1)$ be a complete orthomodular lattice, $(T,R)$ a time frame with reflexive $R$, $P$, $F$, $H$ and $G$ denote the tense operators induced by $(T,R)$, $A,A_1,A_2\in\{P,F\}$, $B,B_1,B_2\in\{H,G\}$ and $p,q\in L^T$. Then the following holds:
\begin{enumerate}[{\rm(i)}]
\item $p\leq q\rightarrow A_1\big(A_2(p)\odot q\big)$,
\item $B(p\odot q)\leq A(p)\odot q$,
\item $B(p)\leq q\rightarrow A(p\odot q)$,
\item $B_1\big(B_2(p)\odot q\big)\leq p\odot q$,
\item $p\rightarrow q\leq A_1\big(p\rightarrow A_2(q)\big)$,
\item $B(p\rightarrow q)\odot p\leq A(q)$,
\item $p\rightarrow B(q)\leq A(p\rightarrow q)$,
\item $B_1\big(p\rightarrow B_2(q)\big)\odot p\leq q$.
\end{enumerate}
\end{theorem}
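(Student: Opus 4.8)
The plan is to collapse all eight inequalities onto the single order-theoretic fact that reflexivity provides, namely Theorem~\ref{th2}(ii): for every $r\in L^T$, every $A\in\{P,F\}$ and every $B\in\{H,G\}$ we have the sandwich $B(r)\leq r\leq A(r)$. (Here $\leq$, $\odot$ and $\rightarrow$ on $L^T$ are all meant componentwise, so Proposition~\ref{prop1} and Lemma~\ref{lem1} remain valid verbatim when read in $L^T$.) As a preliminary I would record the two monotonicity facts that are immediate from~(\ref{equ1}) and lift pointwise to $L^T$: the term $x\odot y$ is isotone in its first argument and $x\rightarrow y$ is isotone in its second argument. Granted these, each of (i)--(viii) becomes a chain of two or three links.

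I would first dispose of (ii), (iv), (vi) and (viii), whose left-hand sides are $\odot$-terms headed by a universal operator, by deflating that operator at the outset. For (ii): $B(p\odot q)\leq p\odot q\leq A(p)\odot q$ (deflation of $B$, then isotonicity of $\odot$ using $p\leq A(p)$). For (iv): $B_1\big(B_2(p)\odot q\big)\leq B_2(p)\odot q\leq p\odot q$ (deflation of $B_1$, then of $B_2$). For (vi): $B(p\rightarrow q)\odot p\leq(p\rightarrow q)\odot p=p\wedge q\leq q\leq A(q)$, where the middle equality is Lemma~\ref{lem1}(i) and the last step is inflation of $A$. For (viii): $B_1\big(p\rightarrow B_2(q)\big)\odot p\leq\big(p\rightarrow B_2(q)\big)\odot p=p\wedge B_2(q)\leq B_2(q)\leq q$, again by Lemma~\ref{lem1}(i) and deflation of $B_2$.

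For the remaining four I would treat (i) and (iii) by first stripping the outer implication through left adjointness (Proposition~\ref{prop1}(ii)). Thus (i) is equivalent to $p\odot q\leq A_1\big(A_2(p)\odot q\big)$, which follows from $p\odot q\leq A_2(p)\odot q\leq A_1\big(A_2(p)\odot q\big)$ (isotonicity of $\odot$ with $p\leq A_2(p)$, then inflation of $A_1$); and (iii) is equivalent to $B(p)\odot q\leq A(p\odot q)$, which follows from $B(p)\odot q\leq p\odot q\leq A(p\odot q)$ (deflation of $B$, then inflation of $A$). The last two need no adjunction: (v) is $p\rightarrow q\leq p\rightarrow A_2(q)\leq A_1\big(p\rightarrow A_2(q)\big)$ (isotonicity of $\rightarrow$ with $q\leq A_2(q)$, then inflation), and (vii) is $p\rightarrow B(q)\leq p\rightarrow q\leq A(p\rightarrow q)$ (isotonicity of $\rightarrow$ with $B(q)\leq q$, then inflation).

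I do not anticipate a genuine obstacle: the whole content sits in Theorem~\ref{th2}(ii), so the essential point is that the hypothesis here is \emph{reflexivity} rather than mere seriality, which is exactly what supplies the inflation/deflation sandwich. The two things to watch are that $\odot$, $\rightarrow$ and $\leq$ are read componentwise on $L^T$ (so that Lemma~\ref{lem1} and the adjunction survive), and that the correct variable of monotonicity is used in each link---$\odot$ in its first variable, $\rightarrow$ in its second---since the wrong choice would break the chain.
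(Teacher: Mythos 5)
Your proof is correct and rests on the same basic strategy as the paper's: each of the eight items is a short chain of inequalities grounded in the reflexivity sandwich $B(r)\leq r\leq A(r)$ of Theorem~\ref{th2}(ii), the isotonicity of $\odot$ in its first and of $\rightarrow$ in its second argument, and left adjointness, all read componentwise on $L^T$. Two local differences are worth recording. First, because you always apply inflation or deflation to the outermost composite term (e.g.\ $A_2(p)\odot q\leq A_1\big(A_2(p)\odot q\big)$ in place of the paper's step $A_1(p\odot q)\leq A_1\big(A_2(p)\odot q\big)$), you never invoke monotonicity of the tense operators, i.e.\ Theorem~\ref{th1}(ii), which the paper uses in every single item; your argument therefore gets by with a strictly smaller toolkit, needing only the sandwich from reflexivity plus termwise isotonicity. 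Second, for items (vi) and (viii) the paper stays inside the residuation calculus: it proves $B(p\rightarrow q)\leq p\rightarrow A(q)$ resp.\ $B_1\big(p\rightarrow B_2(q)\big)\leq p\rightarrow q$ and then converts via Proposition~\ref{prop1}(ii), whereas you bypass adjointness there by the explicit identity $(a\rightarrow b)\odot a=a\wedge b$ of Lemma~\ref{lem1}(i), a lemma the paper deploys only in the proof of Theorem~\ref{th6}. The paper's route keeps all eight proofs stylistically uniform (reduce, then residuate where the conclusion demands it); yours trades that uniformity for economy, leaving only (i) and (iii) dependent on adjointness and making the remaining six items purely order-theoretic.
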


\begin{proof}
We use Theorems~\ref{th1} and \ref{th2}, {\rm(\ref{equ1})} and Proposition~\ref{prop1}.
\begin{enumerate}[(i)]
\item We have
\begin{align*}
     p\odot q & \leq A_1(p\odot q)\text{ by Theorem~\ref{th2}}, \\
            p & \leq A_2(p)\text{ by Theorem~\ref{th2}}, \\
     p\odot q & \leq A_2(p)\odot q\text{ by (\ref{equ1})}, \\
A_1(p\odot q) & \leq A_1\big(A_2(p)\odot q\big)\text{ by Theorem~\ref{th1}}, \\
     p\odot q & \leq A_1\big(A_2(p)\odot q\big)\text{ by transitivity}, \\
            p & \leq q\rightarrow A_1\big(A_2(p)\odot q\big)\text{ by Proposition~\ref{prop1}}.
\end{align*}
The other statements follow in an analogous way.
\item follows from $B(p\odot q)\leq B\big(A(p)\odot q\big)\leq A(p)\odot q$,
\item follows from $B(p)\odot q\leq A\big(B(p)\odot q\big)\leq A(p\odot q)$ by applying Proposition~\ref{prop1},
\item follows from $B_1\big(B_2(p)\odot q\big)\leq B_1(p\odot q)\leq p\odot q$,
\item follows from $p\rightarrow q\leq A_1(p\rightarrow q)\leq A_1\big(p\rightarrow A_2(q)\big)$,
\item follows from $B(p\rightarrow q)\leq B\big(p\rightarrow A(q)\big)\leq p\rightarrow A(q)$ by applying Proposition~\ref{prop1},
\item follows from $p\rightarrow B(q)\leq A\big(p\rightarrow B(q)\big)\leq A(p\rightarrow q)$,
\item follows from $B_1\big(p\rightarrow B_2(q)\big)\leq B_1(p\rightarrow q)\leq p\rightarrow q$ by applying Proposition~\ref{prop1}.
\end{enumerate}
\end{proof}

\section{A construction of the time frame}

A tense logic is established if for a given logic a time frame $(T,R)$ exists such that the lattice together with the tense operators forms a dynamic algebra and the logical connectives are related with tense operators in the way shown in Section~4. Hence, if such a logic incorporating time dimension is created, we can define tense operators $P$, $F$, $H$ and $G$. The question is whether, conversely, for given time set $T$ and given tense operators there exists a suitable time frame $(T,R)$ such that the given tense operators are derived from it. In other words, we ask if for given tense operators $P$, $F$, $H$ and $G$ on a time set $T$ one can find some time preference relation $R$ such that these operators are induced by $(T,R)$. To show that this is possible is the goal of Section~5.

If $P$, $F$, $H$ and $G$ are tense operators on a complete lattice $(L,\vee,\wedge,0,1)$ with time set $T$ then the relations
\begin{align*}
R_1 & :=\{(s,t)\in T^2\mid q(s)\leq P(q)(t)\text{ and }q(t)\leq F(q)(s)\text{ for all }q\in L^T\}, \\
R_2 & :=\{(s,t)\in T^2\mid H(q)(t)\leq q(s)\text{ and }G(q)(s)\leq q(t)\text{ for all }q\in L^T\}, \\
R_3 & :=R_1\cap R_2
\end{align*}
are called {\em the relation induced by $P$ and $F$}, {\em the relation induced by $H$ and $G$} and {\em the relation induced by $P$, $F$, $H$ and $G$}, respectively.

Observe that whenever tense operators $P$, $F$, $H$ and $G$ on a complete lattice $\mathbf L$ are induced by an arbitrarily given time frame then $(\mathbf L,P,F,H,G)$ forms a dynamic algebra, and if, moreover, $\mathbf L$ is a complete orthomodular lattice then Theorems~\ref{th6} and \ref{th7} hold for these operators.

At first we show the relationship between given tense operators $P$ and $F$ and the corresponding operators $P^*$ and $F^*$ induced by the time frame $(T,R)$ where $R$ is induced by $P$ and $F$.

\begin{theorem}\label{th8}
Let $P$ and $F$ be tense operators on a complete lattice $(L,\vee,\wedge,0,1)$ with time set $T$, $R$ denote the relation induced by these operators and $P^*$ and $F^*$ denote the tense operators induced by the time frame $(T,R)$. Then $P^*\leq P$ and $F^*\leq F$.
\end{theorem}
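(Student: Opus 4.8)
The plan is to establish both inequalities directly from the definition of the induced relation $R$, working pointwise. Fix $s\in T$ and $q\in L^T$; since $P^*(q)(s)$ and $F^*(q)(s)$ are joins in the complete lattice $L$, it will suffice to bound each individual term of the defining family by the corresponding value of $P(q)$ or $F(q)$, and then invoke the universal property of the supremum.

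For $P^*\leq P$, I would start from $P^*(q)(s)=\bigvee\{q(t)\mid t\mathrel Rs\}$ and unpack the condition $t\mathrel Rs$. Because $R$ here is the relation $R_1$ induced by $P$ and $F$, the membership $(t,s)\in R$ says precisely that $q'(t)\leq P(q')(s)$ and $q'(s)\leq F(q')(t)$ hold for every $q'\in L^T$. Specializing the first clause to $q'=q$ yields $q(t)\leq P(q)(s)$, so $P(q)(s)$ is an upper bound of the family $\{q(t)\mid t\mathrel Rs\}$; taking the join gives $P^*(q)(s)\leq P(q)(s)$. As $s$ and $q$ are arbitrary, this is $P^*\leq P$.

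The argument for $F^*\leq F$ is symmetric. From $F^*(q)(s)=\bigvee\{q(t)\mid s\mathrel Rt\}$ I would unpack $s\mathrel Rt$, i.e.\ $(s,t)\in R_1$, which now supplies $q'(t)\leq F(q')(s)$ for all $q'$; specializing to $q'=q$ gives $q(t)\leq F(q)(s)$, whence the join is bounded by $F(q)(s)$ and $F^*\leq F$ follows.

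I expect no genuine obstacle: the relation $R_1$ was defined by exactly the conditions needed to force the induced operators to under-approximate the originals, so the result is essentially unwound from the definitions. The only point worth keeping in mind is that this half of the correspondence uses no structural assumption on $R$ (no seriality, reflexivity, or transitivity); such hypotheses, together with properties of the abstract operators, would only be required for the reverse inequalities that would yield $P^*=P$ and $F^*=F$, and these lie outside the present statement.
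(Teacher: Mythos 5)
Your proof is correct and follows essentially the same route as the paper's: the paper's (very terse) proof likewise bounds each term $q(t)$ of the defining join by $P(q)(s)$ (resp.\ $F(q)(s)$) using the relevant clause in the definition of the induced relation $R_1$, and then takes the supremum. Your version merely makes explicit which clause of the definition of $R_1$ is invoked in each case, which the paper leaves to the reader.
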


\begin{proof}
If $q\in L^T$ and $s\in T$ then
\begin{align*}
P^*(q)(s) & =\bigvee\{q(t)\mid t\mathrel Rs\}\leq P(q)(s), \\
F^*(q)(s) & =\bigvee\{q(t)\mid s\mathrel Rt\}\leq F(q)(s).
\end{align*}
\end{proof}

Analogously, one can prove

\begin{theorem}\label{th9}
Let $H$ and $G$ be tense operators on a complete lattice $(L,\vee,\wedge,0,1)$ with time set $T$, $R$ denote the relation induced by these operators and $H^*$ and $G^*$ denote the tense operators induced by the time frame $(T,R)$. Then $H\leq H^*$ and $G\leq G^*$.
\end{theorem}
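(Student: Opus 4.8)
The plan is to prove Theorem~\ref{th9} by the same method used for Theorem~\ref{th8}, but with the inequalities reversed, since $H$ and $G$ are meet-type operators rather than join-type ones. Fix $q\in L^T$ and $s\in T$. By definition of the induced operators $H^*$ and $G^*$ we have
\[
H^*(q)(s)=\bigwedge\{q(t)\mid t\mathrel Rs\}\quad\text{and}\quad G^*(q)(s)=\bigwedge\{q(t)\mid s\mathrel Rt\},
\]
where $R=R_2$ is the relation induced by $H$ and $G$. The goal is to show $H(q)(s)\leq H^*(q)(s)$ and $G(q)(s)\leq G^*(q)(s)$.

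First I would unpack the definition of $R_2$: by construction $(s,t)\in R$ means precisely that $H(p)(t)\leq p(s)$ and $G(p)(s)\leq p(t)$ hold for every $p\in L^T$, and in particular for our fixed $q$. For the $H$-inequality, I would observe that $H^*(q)(s)$ is a meet taken over all $t$ with $t\mathrel Rs$; to show $H(q)(s)$ is a lower bound for this meet, it suffices to check $H(q)(s)\leq q(t)$ for each such $t$. But $t\mathrel Rs$ unpacks (reading the defining condition of $R_2$ with the roles matching the pattern $H(q)(s)\leq q(t)$) to exactly this inequality applied to $q$. Hence $H(q)(s)$ lies below every member of the meet, so $H(q)(s)\leq H^*(q)(s)$. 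The argument for $G$ is symmetric: for each $t$ with $s\mathrel Rt$, the defining clause $G(q)(s)\leq q(t)$ gives that $G(q)(s)$ is a lower bound for $\{q(t)\mid s\mathrel Rt\}$, whence $G(q)(s)\leq G^*(q)(s)$.

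Since $q$ and $s$ were arbitrary, this establishes $H\leq H^*$ and $G\leq G^*$ in the order defined just before Theorem~\ref{th8}, completing the proof. The only point requiring care is bookkeeping in how the membership condition $(s,t)\in R_2$ is oriented relative to the index in the meet defining $H^*(q)(s)$ versus $G^*(q)(s)$: one must line up which of the two defining clauses of $R_2$ feeds the $H$-estimate and which feeds the $G$-estimate, matching the quantifier $t\mathrel Rs$ against $s\mathrel Rt$. I expect this orientation matching to be the main (and essentially only) obstacle; everything else is the immediate observation that a universally quantified defining inequality makes the relevant operator value a lower bound for the corresponding meet.
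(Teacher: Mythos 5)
Your proof is correct and matches the paper's approach: the paper proves Theorem~\ref{th9} ``analogously'' to Theorem~\ref{th8}, which is exactly your argument of unpacking the defining clauses of $R_2$ so that $H(q)(s)\leq q(t)$ for all $t\mathrel Rs$ and $G(q)(s)\leq q(t)$ for all $s\mathrel Rt$, making each operator value a lower bound of the corresponding meet. Your care about which clause of $R_2$ feeds which inequality is precisely the right bookkeeping, and it is resolved correctly.
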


From Theorems~\ref{th8} and \ref{th9} we obtain

\begin{corollary}\label{cor1}
Let $P$, $F$, $H$ and $G$ be tense operators on a complete lattice $(L,\vee,\wedge,0,1)$ with time set $T$, $R$ denote the relation induced by these operators and $P^*$, $F^*$, $H^*$ and $G^*$ denote the tense operators induced by the time frame $(T,R)$. Then $P^*\leq P$, $F^*\leq F$, $H\leq H^*$ and $G\leq G^*$.
\end{corollary}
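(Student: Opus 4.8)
The plan is to reduce to Theorems~\ref{th8} and \ref{th9}, but with one point of care: the relation $R$ figuring in the corollary is $R_3=R_1\cap R_2$, whereas Theorem~\ref{th8} is stated for the operators induced by $R_1$ and Theorem~\ref{th9} for those induced by $R_2$. Hence the $P^*$, $F^*$, $H^*$, $G^*$ of the corollary are \emph{not} literally the starred operators of the two theorems, and I first have to compare the operators induced by $R_3$ with those induced by $R_1$ and by $R_2$.

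The bridge is the monotone dependence of the induced operators on the underlying relation. If $R\subseteq R'$ are relations on $T$, then for every $q\in L^T$ and $s\in T$ the join defining $P$ runs over a smaller index set on the left, so
\[
\bigvee\{q(t)\mid t\mathrel Rs\}\leq\bigvee\{q(t)\mid t\mathrel{R'}s\},
\]
and likewise for $F$; dually, since $H$ and $G$ are defined by meets, shrinking the index set can only enlarge the value, so the operators induced by $R'$ lie below those induced by $R$. In short, $P$ and $F$ are isotone in the relation while $H$ and $G$ are antitone in it.

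Writing $P^{R_1}$, $F^{R_1}$, $H^{R_2}$, $G^{R_2}$ for the operators induced by the respective relations, I apply this with $R_3\subseteq R_1$ to get $P^*\leq P^{R_1}\leq P$ and $F^*\leq F^{R_1}\leq F$, the second inequality in each chain being Theorem~\ref{th8}; and with $R_3\subseteq R_2$ to get $H\leq H^{R_2}\leq H^*$ and $G\leq G^{R_2}\leq G^*$, the first inequality in each chain being Theorem~\ref{th9}. This yields all four asserted inequalities.

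The only genuine obstacle is the one just flagged: recognising that the corollary's relation is the intersection rather than the individual relations, and that the direction of monotonicity is reversed for the meet operators $H$ and $G$. Once the isotone/antitone behaviour in the relation is in place, neither orthomodularity nor seriality is needed, and the argument is pure order theory, valid for any complete lattice. Alternatively one could bypass the monotonicity observation and argue directly, exactly as in the proof of Theorem~\ref{th8}: for $t\mathrel{R_3}s$ one has in particular $t\mathrel{R_1}s$, whence $q(t)\leq P(q)(s)$ by the definition of $R_1$, and taking the join over all such $t$ gives $P^*(q)(s)\leq P(q)(s)$; the meet operators are handled symmetrically using $R_3\subseteq R_2$ and the definition of $R_2$.
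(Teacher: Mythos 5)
Your proof is correct, and at bottom it takes the paper's route: the paper offers no written proof at all, stating only that the corollary is obtained ``from Theorems~\ref{th8} and~\ref{th9}.'' The subtlety you flag is real, however, and is exactly what that one-line deduction glosses over: the corollary's relation is $R_3=R_1\cap R_2$, whereas Theorem~\ref{th8} concerns the operators induced by $R_1$ and Theorem~\ref{th9} those induced by $R_2$, so the starred operators of the corollary are not literally those of the two theorems. Your bridging lemma --- that the induced $P$ and $F$ are isotone in the underlying relation while the induced $H$ and $G$ are antitone in it, since shrinking the index set shrinks joins and enlarges meets --- is precisely the missing step, and your chains $P^*\leq P^{R_1}\leq P$, $F^*\leq F^{R_1}\leq F$, $H\leq H^{R_2}\leq H^*$, $G\leq G^{R_2}\leq G^*$ are all valid. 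Your alternative direct argument (from $t\mathrel{R_3}s$ infer $t\mathrel{R_1}s$, hence $q(t)\leq P(q)(s)$ by the definition of $R_1$, then take the join; dually for the meet operators via $R_3\subseteq R_2$) is equally sound and is closest in spirit to the paper's own proof of Theorem~\ref{th8}, so it is presumably what the authors had in mind. In short: your proposal not only matches the intended argument but supplies a justification the paper leaves implicit.
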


\begin{example}\label{ex2}
Consider the lattice $\mathbf L$ and the time set $T=\{1,2,3,4,5\}$ from Example~\ref{ex1}. Define new tense operators $P$, $F$, $H$ and $G$ as follows:
\[
P(q)(t):=\left\{
\begin{array}{ll}
q(t) & \text{if }t=2, \\
1    & \text{otherwise}
\end{array}
\right.\quad\quad\quad F(q)(t):=\left\{
\begin{array}{ll}
q(t) & \text{if }t=1, \\
1    & \text{otherwise}
\end{array}
\right.
\]
\[
H(q)(t):=\left\{
\begin{array}{ll}
q(t) & \text{if }t=1, \\
0    & \text{otherwise}
\end{array}
\right.\quad\quad\quad G(q)(t):=\left\{
\begin{array}{ll}
q(t) & \text{if }t=2, \\
0    & \text{otherwise}
\end{array}
\right.
\]
for all $q\in L^T$ and all $t\in T$. Note that these operators satisfy the conditions
\[
H(q)\leq q\leq P(q)\text{ and }G(q)\leq q\leq F(q)
\]
for all $q\in L^T$ which were considered in Theorem~\ref{th2}. Let $R$ denote the relation induced by $P$, $F$, $H$ and $G$. Then $R=\{1\}^2\cup\{2\}^2\cup\{3,4,5\}^2$. This can be seen as follows: Obviously, $\{1\}^2\cup\{2\}^2\cup\{3,4,5\}^2\subseteq R$. Now let $(s,t)\in R$. \\
$s=1\neq t$ would imply $q(t)\leq F(q)(1)=q(1)$ for all $q\in L^T$, a contradiction. \\
$s=2\neq t$ would imply $q(2)=G(q)(2)\leq q(t)$ for all $q\in L^T$, a contradiction. \\
$s\neq1=t$ would imply $q(1)=H(q)(1)\leq q(s)$ for all $q\in L^T$, a contradiction. \\
$s\neq2=t$ would imply $q(s)\leq P(q)(2)=q(2)$ for all $q\in L^T$, a contradiction. \\
This shows $R=\{1\}^2\cup\{2\}^2\cup\{3,4,5\}^2$. For $p$ from Example~\ref{ex1} we have
\[
\begin{array}{l|l|l|l|l|l}
t & 1 & 2 & 3 & 4 & 5 \\
\hline
p(t)      & c' & b' & c' & a' & b' \\
P(p)(t)   & 1  & b' & 1  & 1  & 1 \\
F(p)(t)   & c' & 1  & 1  & 1  & 1 \\
P^*(p)(t) & c' & b' & 1  & 1  & 1 \\
F^*(p)(t) & c' & b' & 1  & 1  & 1
\end{array}
\]
showing $P^*\leq P$ and $F^*\leq F$ in accordance with Corollary~\ref{cor1}, but $P^*\neq P$ and $F^*\neq F$, thus this inequality is strict.
\end{example}

\begin{remark}
Although the new tense operators $P^*$, $F^*$, $H^*$ and $G^*$ constructed as shown in Corollary~\ref{cor1} satisfy only the inequalities $P^*\leq P$, $F^*\leq F$, $H\leq H^*$ and $G\leq G^*$ and, by Example~\ref{ex2}, these inequalities may be strict, it is almost evident from the construction of these operators that $(\mathbf L,P^*,F^*,H^*,G^*)$ forms a dynamic algebra and that these operators are connected with the logical connectives $\odot$ and $\rightarrow$ in the way shown in Theorems~\ref{th6} and \ref{th7} provided $\mathbf L$ is a complete orthomodular lattice.
\end{remark}

Conversely, if a time frame $(T,R)$ on a complete lattice is given and we consider the tense operators $P$, $F$, $H$ and $G$ induced by $(T,R)$ then the relation induced by these operators coincides with $R$, see the following result.

\begin{theorem}\label{th4}
Let $(L,\vee,\wedge,0,1)$ be a complete lattice, $(T,R)$ a time frame, $P$, $F$, $H$ and $G$ denote the tense operators induced by $(T,R)$ and $R^*$ denote the relation induced by these operators. Then $R=R^*$ and hence the tense operators induced by the time frame $(T,R^*)$ coincide with those induced by the time frame $(T,R)$.
\end{theorem}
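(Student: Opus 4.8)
The plan is to establish the set equality $R=R^*$ by proving the two inclusions $R\subseteq R^*$ and $R^*\subseteq R$ separately; once this is done the final clause is immediate, since a time frame induces its tense operators through its relation alone, so $R=R^*$ forces $(T,R)$ and $(T,R^*)$ to induce identical operators. Throughout, recall that $R^*$ is by definition $R_1\cap R_2$, where $R_1$ and $R_2$ are the relations induced by $P,F$ and by $H,G$ respectively.

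For $R\subseteq R^*$ I would simply unwind the four defining conditions of $R_1$ and $R_2$ against the explicit formulas for the induced operators. Fix $(s,t)\in R$ and $q\in L^T$. Since $s\mathrel Rt$, the element $q(s)$ is one of the joinands in $P(q)(t)=\bigvee\{q(v)\mid v\mathrel Rt\}$, whence $q(s)\le P(q)(t)$; symmetrically $q(t)$ occurs among $\{q(v)\mid s\mathrel Rv\}$, so $q(t)\le F(q)(s)$. Dually, $q(s)$ is one of the meetands in $H(q)(t)=\bigwedge\{q(v)\mid v\mathrel Rt\}$, giving $H(q)(t)\le q(s)$, and likewise $G(q)(s)\le q(t)$. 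As $q$ was arbitrary, $(s,t)$ satisfies all four conditions, hence lies in $R_1\cap R_2=R^*$. This direction is purely formal.

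The reverse inclusion is where the only genuine idea enters, and it suffices to prove $R_1\subseteq R$, since then $R^*\subseteq R_1\subseteq R$. I would argue by contraposition with a separating test function. Suppose $(s,t)\notin R$ and let $q\in L^T$ be the indicator $q(s):=1$ and $q(v):=0$ for $v\neq s$. No index $v$ with $v\mathrel Rt$ can equal $s$, for otherwise $s\mathrel Rt$ and $(s,t)\in R$; hence every joinand $q(v)$ in $P(q)(t)=\bigvee\{q(v)\mid v\mathrel Rt\}$ vanishes (the empty-join case also yielding $0$), so $P(q)(t)=0$. But $q(s)=1$, and as $\mathbf L$ is non-trivial we have $1\neq 0$, so the condition $q(s)\le P(q)(t)$ fails and $(s,t)\notin R_1$. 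Contrapositively $R_1\subseteq R$.

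The main obstacle, such as it is, is precisely the choice of this separating indicator function together with the essential use of non-triviality of $\mathbf L$: without $0\neq 1$ the argument collapses. Everything else reduces to routine verification against the definitions of the induced operators. Combining the two inclusions gives $R=R^*$, and the coincidence of the operators induced by $(T,R)$ and $(T,R^*)$ follows at once.
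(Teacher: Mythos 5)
Your proposal is correct and follows essentially the same route as the paper: the inclusion $R\subseteq R^*$ by reading off the defining inequalities from the induced operators, and the reverse inclusion via the same indicator function ($q(s)=1$, $q(v)=0$ otherwise, which is exactly the paper's $q_s$), forcing $P(q)(t)=0$ against $q(s)=1$ and using non-triviality of $\mathbf L$. The only cosmetic difference is that you phrase the second step as contraposition ($R_1\subseteq R$) while the paper phrases it as a contradiction from $(s,t)\in R^*\setminus R$; these are the same argument.
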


\begin{proof}
If $s\mathrel Rt$ then
\begin{align*}
H(q)(t) & =\bigwedge\{q(u)\mid u\mathrel Rt\}\leq q(s)\leq\bigvee\{q(u)\mid u\mathrel Rt\}=P(q)(t), \\
G(q)(s) & =\bigwedge\{q(u)\mid s\mathrel Ru\}\leq q(t)\leq\bigvee\{q(u)\mid s\mathrel Ru\}=F(q)(s)
\end{align*}
for all $q\in L^T$ and hence $s\mathrel{R^*}t$. This shows $R\subseteq R^*$. Now assume $R\neq R^*$. Then there exists some $(s,t)\in R^*\setminus R$. For every $u\in T$ let $q_u$ denote the following element of $L^T$:
\[
q_u(t):=\left\{
\begin{array}{ll}
1 & \text{if }t=u, \\
0 & \text{otherwise}
\end{array}
\right.
\]
($t\in T$). Now we would obtain
\[
1=q_s(s)\leq P(q_s)(t)=\bigvee\{q_s(u)\mid u\mathrel Rt\}=\bigvee\{0\mid u\mathrel Rt\}=0,
\]
a contradiction. This shows $R=R^*$.
\end{proof}

\begin{remark}
Assume that tense operators $P$, $F$, $H$ and $G$ on a complete lattice with time set $T$ are given. We want to know if such operators are induced by a suitable time frame with possibly unknown time preference relation. We construct the relation $R$ on $T$ induced by these operators and then we construct the tense operators $P^*$, $F^*$, $H^*$ and $G^*$ induced by the time frame $(T,R)$. Now two cases can happen: Either $P^*=P$, $F^*=F$, $H^*=H$ and $G^*=G$ or at least one of these equalities is violated, it means it is a proper inequality. In the first case the given operators $P$, $F$, $H$ and $G$ are induced by the time frame $(T,R)$ whereas in the second case $P$, $F$, $H$ and $G$ are not induced by any time frame because of Theorems~\ref{th4}.
\end{remark}

In Theorem~\ref{th4} we showed that if a complete lattice $(L,\vee,\wedge,0,1)$ and a time frame $(T,R)$ are given and $P$, $F$, $H$ and $G$ denote the tense operators induced by this time frame then the relation $R^*$ induced by these operators coincides with $R$. If, conversely, the tense operators $P$ and $F$ are given on a complete lattice with a given time set $T$, we can ask whether we can construct a relation inducing these operators. In Theorem~\ref{th8} we showed that if $R$ is induced by given tense operators $P$ and $F$ on a given time set $T$ in the complete lattice $\mathbf L$ then the operators $P^*$ and $F^*$ induced by the time frame $(T,R)$ need not coincide with $P$ and $F$, respectively, they satisfy only the inequalities $P^*\leq P$ and $F^*\leq F$. However, such tense operators $P^*$ and $F^*$ are still related with the logical connectives $\odot$ and $\rightarrow$ as shown in Theorems~\ref{th6} and \ref{th7} provided the complete lattice $\mathbf L$ is orthomodular. We are going to show that the given time set $T$ can be extended to some set $\bar T$ and $R$ can be extended to some binary relation $\bar R$ on $\bar T$ such that the tense operators induced by the time frame $(\bar T,\bar R)$ can be considered in some sense as extensions of the given tense operators $P$ and $F$, respectively. Put
\begin{equation}\label{equ2}
\bar T:=T_1\cup T\cup T_2\text{ where }T_1:=T\times\{1\}\text{ and }T_2:=T\times\{2\}.
\end{equation}
We extend our ``world'' $L^T$ by adding two of its copies, so-called ``parallel worlds'', namely the ``past'' $L^{T_1}$ and the ``future'' $L^{T_2}$. In this way we obtain our ``new world'' $L^{\bar T}$ over the extended time set $\bar T$. We also extend our time depending propositions $q\in L^T$ to $\bar q\in L^{\bar T}$ by defining
\begin{equation}\label{equ3}
\begin{array}l
\bar q\big((s,1)\big):=P(q)(s), \\
\bar q(s):=q(s), \\
\bar q\big((s,2)\big):=F(q)(s) \\
\text{for all }s\in T.
\end{array}
\end{equation}
Now we show that the given operators $P$ and $F$ can be considered in some sense as restrictions of the operators $\bar P$ and $\bar F$ induced by the time frame $(\bar T,\bar R)$, respectively.

\begin{theorem}\label{th3}
Let $P$ and $F$ be tense operators on a complete lattice $(L,\vee,\wedge,0,1)$ with time set $T$ and $R$ denote the relation induced by these operators. Define $\bar T$ by {\rm(\ref{equ2})}, put
\[
\bar R:=\{\big((s,1),s\big)\mid s\in T\}\cup R\cup\{\big(s,(s,2)\big)\mid s\in T\}.
\]
and let $\bar P$ and $\bar F$ denote the tense operators induced by the time frame $(\bar T,\bar R)$. Moreover, for every $q\in L^T$ let $\bar q\in L^{\bar T}$ denote the extension of $q$ defined by {\rm(\ref{equ3})}. Then $\bar R|T=R$ and
\[
\big(\bar P(\bar q)\big)|T=P(q)\text{ and }\big(\bar F(\bar q)\big)|T=F(q)
\]
for all $q\in L^T$.
\end{theorem}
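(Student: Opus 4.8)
The plan is to reduce everything to a direct computation from the definition of the induced operators, keeping careful track of which elements of $\bar T$ are $\bar R$-related to a given point of the middle copy $T$. The relation $\bar R$ is a union of three pieces, and the crucial observation is that these three fragments live in disjoint regions of $\bar T$: the pairs in $\{\big((s,1),s\big)\mid s\in T\}$ have first coordinate in $T_1$, the pairs in $\{\big(s,(s,2)\big)\mid s\in T\}$ have second coordinate in $T_2$, and only the middle fragment $R$ lies entirely in $T^2$. Since $T$, $T_1$ and $T_2$ are pairwise disjoint, intersecting $\bar R$ with $T^2$ leaves exactly $R$, which gives $\bar R|T=R$.

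Next I would fix $q\in L^T$ and $s\in T$ and read off the $\bar R$-predecessors of $s$, that is, the set $\{x\in\bar T\mid x\mathrel{\bar R}s\}$. The first fragment contributes $(s,1)$, the middle fragment contributes every $u\in T$ with $u\mathrel Rs$, and the third fragment contributes nothing, since its targets lie in $T_2$. Hence, by the definition of $\bar P$ and of the extension $\bar q$ given in {\rm(\ref{equ3})},
\[
\bar P(\bar q)(s)=\bar q\big((s,1)\big)\vee\bigvee\{\bar q(u)\mid u\mathrel Rs\}=P(q)(s)\vee\bigvee\{q(u)\mid u\mathrel Rs\}.
\]
The second join is precisely $P^*(q)(s)$, where $P^*$ denotes the operator induced by $(T,R)$, and Theorem~\ref{th8} gives $P^*\leq P$, so this term is absorbed and $\bar P(\bar q)(s)=P(q)(s)$. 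This yields $\big(\bar P(\bar q)\big)|T=P(q)$.

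The argument for $\bar F$ is the mirror image. For $s\in T$ the $\bar R$-successors of $s$ are the $u\in T$ with $s\mathrel Ru$ together with the single future point $(s,2)$, so
\[
\bar F(\bar q)(s)=\bigvee\{q(u)\mid s\mathrel Ru\}\vee\bar q\big((s,2)\big)=F^*(q)(s)\vee F(q)(s),
\]
and $F^*\leq F$ from Theorem~\ref{th8} again collapses the join to $F(q)(s)$, giving $\big(\bar F(\bar q)\big)|T=F(q)$.

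The only point that needs genuine care---the main obstacle, such as it is---is the bookkeeping of the three disjoint fragments of $\bar R$: one must check that the adjoined past point $(s,1)$ really is an $\bar R$-predecessor of $s$ (and the future point $(s,2)$ an $\bar R$-successor), and that through the chosen extension $\bar q$ it contributes exactly the value $P(q)(s)$ (respectively $F(q)(s)$) needed to dominate the internal $P^*$- (respectively $F^*$-) contribution. Once this is in place, the role of Theorem~\ref{th8} is precisely to guarantee that the extra past/future values, and not the internal ones, determine the supremum.
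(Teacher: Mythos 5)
Your proof is correct and follows essentially the same route as the paper: the same decomposition of the $\bar R$-predecessors (respectively successors) of a point $s\in T$ into the single adjoined point $(s,1)$ (respectively $(s,2)$) and the internal $R$-related points, followed by absorption of the internal join into $P(q)(s)$ (respectively $F(q)(s)$). The only cosmetic difference is that you absorb that join by citing Theorem~\ref{th8} ($P^*\leq P$, $F^*\leq F$), whereas the paper rederives the inequality $\bigvee\{q(t)\mid t\mathrel Rs\}\leq P(q)(s)$ inline from the definition of the induced relation $R$ --- which is exactly the content of Theorem~\ref{th8} anyway.
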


\begin{proof}
We have $\bar R|T=\bar R\cap T^2=R$. If $q\in L^T$ and $s\in T$ then $q(t)\leq P(q)(s)$ for all $t\in T$ with $t\mathrel Rs$ and hence $\bigvee\{q(t)\mid t\mathrel Rs\}\leq P(q)(s)$ which implies
\begin{align*}
\bar P(\bar q)(s) & =\bigvee\{\bar q(\bar t)\mid\bar t\bar Rs\}=\bar q\big((s,1)\big)\vee\bigvee\{\bar q(t)\mid t\mathrel Rs\}=P(q)(s)\vee\bigvee\{q(t)\mid t\mathrel Rs\}= \\
                  & =P(q)(s)
\end{align*}
showing $\big(\bar P(\bar q)\big)|T=P(q)$. Analogously, one can prove $\big(\bar F(\bar q)\big)|T=F(q)$.
\end{proof}

An analogous result holds for $H$ and $G$ instead of $P$ and $F$, respectively, but the extensions of $q\in L^T$ to $\bar q\in L^{\bar T}$ must be slightly modified.

\begin{theorem}
Let $H$ and $G$ be tense operators on a complete lattice $(L,\vee,\wedge,0,1)$ with time set $T$ and $R$ denote the relation induced by these operators. Define $\bar T$ by {\rm(\ref{equ2})}, put
\[
\bar R:=\{\big((s,1),s\big)\mid s\in T\}\cup R\cup\{\big(s,(s,2)\big)\mid s\in T\}.
\]
and let $\bar H$ and $\bar G$ denote the tense operators induced by the time frame $(\bar T,\bar R)$. Moreover, for every $q\in L^T$ let $\bar q\in L^{\bar T}$ denote the extension of $q$ defined by
\begin{align*}
\bar q\big((s,1)\big) & :=H(q)(s), \\
            \bar q(s) & :=q(s), \\
\bar q\big((s,2)\big) & :=G(q)(s)
\end{align*}
for all $s\in T$. Then $\bar R|T=R$ and
\[
\big(\bar H(\bar q)\big)|T=H(q)\text{ and }\big(\bar G(\bar q)\big)|T=G(q)
\]
for all $q\in L^T$.
\end{theorem}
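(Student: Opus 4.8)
The plan is to mirror the proof of Theorem~\ref{th3} line for line, replacing every supremum by an infimum and invoking the clauses of the definition of the relation $R$ induced by $H$ and $G$ (that is, the relation $R_2$) that govern the meet operators. First I would record the easy identity $\bar R|T=\bar R\cap T^2=R$: the two adjoined sets of pairs, $\{\big((s,1),s\big)\mid s\in T\}$ and $\{\big(s,(s,2)\big)\mid s\in T\}$, involve only the new points of $T_1$ and $T_2$, so they contribute nothing inside $T^2$.

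Next, fix $q\in L^T$ and $s\in T$ and compute $\bar H(\bar q)(s)=\bigwedge\{\bar q(\bar t)\mid\bar t\mathrel{\bar R}s\}$. The $\bar R$-predecessors of the point $s\in T$ are exactly the single adjoined past point $(s,1)$ together with all $t\in T$ satisfying $t\mathrel Rs$; the pairs $\big(s,(s,2)\big)$ make $s$ a predecessor of $(s,2)$, not a successor, so they do not enter. Using the modified extension $\bar q\big((s,1)\big)=H(q)(s)$ and $\bar q(t)=q(t)$ for $t\in T$, this meet equals $H(q)(s)\wedge\bigwedge\{q(t)\mid t\mathrel Rs\}$. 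The crux is then the inequality $H(q)(s)\leq q(t)$ for every $t$ with $t\mathrel Rs$: since $t\mathrel Rs$ means $(t,s)\in R=R_2$, reading off the defining condition of $R_2$ for the pair $(t,s)$ gives precisely $H(q)(s)\leq q(t)$ for all $q\in L^T$. Hence $H(q)(s)\leq\bigwedge\{q(t)\mid t\mathrel Rs\}$, the meet collapses, and $\bar H(\bar q)(s)=H(q)(s)$, which is $\big(\bar H(\bar q)\big)|T=H(q)$.

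The argument for $G$ is the mirror image in the opposite time direction. Here one collects the $\bar R$-successors of $s$, namely the adjoined future point $(s,2)$ together with all $t\in T$ with $s\mathrel Rt$, so that $\bar G(\bar q)(s)=G(q)(s)\wedge\bigwedge\{q(t)\mid s\mathrel Rt\}$; the required inequality $G(q)(s)\leq q(t)$ for $s\mathrel Rt$ is the other clause in the definition of $R_2$, applied to the pair $(s,t)\in R$. This again forces the meet to reduce to its first term, yielding $\big(\bar G(\bar q)\big)|T=G(q)$.

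There is no genuine obstacle, but one point deserves care and explains why the statement insists on the modified extension. Because $\bar H$ and $\bar G$ are meets rather than joins, it is the value placed at the adjoined point that must be pushed down to the desired level: had one kept $\bar q\big((s,1)\big)=P(q)(s)$ as in Theorem~\ref{th3}, the meet $P(q)(s)\wedge\bigwedge\{q(t)\mid t\mathrel Rs\}$ would in general sit strictly above $H(q)(s)$, so setting $\bar q\big((s,1)\big)=H(q)(s)$ and $\bar q\big((s,2)\big)=G(q)(s)$ is exactly what makes the two terms meet at the right value. Thus the only thing to watch is the bookkeeping of inequality directions --- predecessors versus successors, and which clause of $R_2$ to cite --- everything else being a routine transcription of the proof of Theorem~\ref{th3}.
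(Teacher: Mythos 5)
Your proof is correct and is exactly the argument the paper intends: the paper states this theorem without proof as the ``analogous'' counterpart of Theorem~\ref{th3}, and your transcription---replacing joins by meets, identifying the $\bar R$-predecessors (resp.\ successors) of a point $s\in T$ as $(s,1)$ (resp.\ $(s,2)$) together with the $R$-predecessors (resp.\ successors), and citing the two clauses of the definition of $R_2$ to collapse each meet to its first term---is precisely that analogue. Your closing observation about why the adjoined points must carry the values $H(q)(s)$ and $G(q)(s)$ rather than $P(q)(s)$ and $F(q)(s)$ correctly explains the modification of the extension that the paper flags.
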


\begin{example}
Consider the time set $T$, the proposition $p$ and the tense operators $P$ and $F$ from Example~\ref{ex1} and write $ti$ instead of $(t,i)$ for $t\in T$ and $i=1,2$. Let $R$ denote the relation induced by $P$ and $F$. Then
\begin{align*}
     R & =\{(s,t)\in\{1,2,3,4,5\}^2\mid s\leq t\}, \\
\bar R & =\{(s1,s)\mid s\in T\}\cup R\cup\{(s,s2)\mid s\in T\}.
\end{align*}
Let $\bar P$ and $\bar F$ denote the tense operators induced by the time frame $(\bar T,\bar R)$. Then we have
\[
\begin{array}{l|l|l|l|l|l|l|l|l|l|l|l|l|l|l|l}
\bar t                 & 11 & 21 & 31 & 41 & 51 & 1  & 2  & 3  & 4  & 5  & 12 & 22 & 32 & 42 & 52 \\
\hline
p(t)                   &    &    &    &    &    & c' & b' & c' & a' & b' &    &    &    &    & \\
P(p)(t)                &    &    &    &    &    & c' & 1  & 1  & 1  & 1  &    &    &    &    & \\
F(p)(t)                &    &    &    &    &    & 1  & 1  & 1  & 1  & b' &    &    &    &    & \\
\bar p(\bar t)         & c' & 1  & 1  & 1  & 1  & c' & b' & c' & a' & b' & 1  & 1  & 1  & 1  & b' \\
\bar P(\bar p)(\bar t) & 0  & 0  & 0  & 0  & 0  & c' & 1  & 1  & 1  & 1  & c' & b' & c' & a' & b' \\
\bar F(\bar p)(\bar t) & c' & b' & c' & a' & b' & 1  & 1  & 1  & 1  & b' & 0  & 0  & 0  & 0  & 0
\end{array}
\]
where
\[
T_1=\{11,21,31,41,51\}, T=\{1,2,3,4,5\}\text{ and }T_2=\{12,22,32,42,52\}.
\]
Evidently, $\bar R|T=R$, $\big(\bar P(\bar q)\big)|T=P(q)$ and $\big(\bar F(\bar q)\big)|T=F(q)$ in accordance with Theorem~\ref{th3}.
\end{example}

Authors' addresses:

Ivan Chajda \\
Palack\'y University Olomouc \\
Faculty of Science \\
Department of Algebra and Geometry \\
17.\ listopadu 12 \\
771 46 Olomouc \\
Czech Republic \\
ivan.chajda@upol.cz

Helmut L\"anger \\
TU Wien \\
Faculty of Mathematics and Geoinformation \\
Institute of Discrete Mathematics and Geometry \\
Wiedner Hauptstra\ss e 8-10 \\
1040 Vienna \\
Austria, and \\
Palack\'y University Olomouc \\
Faculty of Science \\
Department of Algebra and Geometry \\
17.\ listopadu 12 \\
771 46 Olomouc \\
Czech Republic \\
helmut.laenger@tuwien.ac.at

\begin{thebibliography}{99}
\bibitem{Be}
L.~Beran, Orthomodular Lattices. Algebraic Approach. Reidel, Dordrecht 1985. ISBN 90-277-1715-X.
\bibitem{Bu}
J.~P.~Burgess, Basic tense logic. Handbook of Philosophical Logic, Vol.~2, 89-133. Reidel, Dordrecht 1984.
\bibitem{BV}
G.~Birkhoff and J.~von~Neumann, The logic of quantum mechanics. Ann.\ of Math. {\bf37} (1936), 823--843.
\bibitem{CL17a}
I.~Chajda and H.~L\"anger, Orthomodular lattices can be converted into left residuated l-groupoids. Miskolc Math.\ Notes {\bf18} (2017), 685--689.
\bibitem{CL17b}
I.~Chajda and H.~L\"anger, Residuation in orthomodular lattices. Topol.\ Algebra Appl.\ {\bf5} (2017), 1--5.
\bibitem{CP}
I.~Chajda and J.~Paseka, Algebraic Approach to Tense Operators. Heldermann, Lemgo 2015. ISBN 978-3-88538-235-5.
\bibitem{DG}
D.~Diaconescu and G.~Georgescu, Tense operators on MV-algebras and \L ukasiewicz-Moisil algebras. Fund.\ Inform.\ {\bf81} (2007), 379--408.
\bibitem E
W.~B.~Ewald, Intuitionistic tense and modal logic. J.\ Symbolic Logic {\bf51} (1986), 166--179.
\bibitem{FP}
A.~V.~Figallo and G.~Pelaitay, Tense operators on De Morgan algebras. Log.\ J.\ IGPL {\bf22} (2014), 255--267.
\bibitem{FGV}
M.~Fisher, D.~Gabbay and L.~Vala (eds.), Handbook of Temporal Reasoning in Artificial Intelligence. Elsevier, Amsterdam 2005. ISBN 0-444-51493-7/hbk.
\bibitem{GHR}
D.~M.~Gabbay, I.~Hodkinson and M.~Reynolds, Temporal Logic. Vol.~1. Mathematical Foundations and Computational Aspects. Oxford Univ.\ Press, New York 1994. ISBN 0-19-853769-7.
\bibitem G
A.~Galton, Temporal logic and Computer Science: an overview. Temporal Logics and Their Applications, 1--52. Academic Press, London 1987.
\bibitem{HR}
I.~Hodkinson and M.~Reynolds, Temporal logic. Handbook of Modal Logic, 655--720. Elsevier, Amsterdam 2007.
\bibitem H
K.~Husimi, Studies on the foundation of quantum mechanics. I. Proc.\ Phys.-Math.\ Soc.\ Japan {\bf19} (1937), 766--789.
\bibitem K
G.~Kalmbach, Orthomodular Lattices. Academic Press, London 1983. ISBN 0-12-394580-1.
\bibitem{P57}
A.~N.~Prior, Time and Modality. Oxford Univ.\ Press, Oxford 1957.
\bibitem{P67}
A.~N.~Prior, Past, Present and Future. Clarendon Press, Oxford 1967.
\bibitem{RU}
N.~Rescher and A.~Urquhart, Temporal Logic. Springer, New York 1971.
\end{thebibliography}
\end{document}